\newtheorem{theorem}{Theorem}[section]
\newtheorem{proposition}[theorem]{Proposition}
\newtheorem{lemma}[theorem]{Lemma}
\newtheorem{remark}{Remark}[section]
\newtheorem{definition}{Definition}[section]
\newtheorem{maintheorem}{Theorem}
\newcommand{\blanksquare}{\,\,\,$\sqcup\!\!\!\!\sqcap$}
\newcounter{example}
{\par}
\begin{document}

\title[Shadowing, expansiveness and stability]{Shadowing, expansiveness and stability of divergence-free vector fields}

\author[C\'elia Ferreira]{C\'elia Ferreira}
\address{Departamento de Matem\'atica, Universidade do Porto, 
Rua do Cam\-po Alegre, 687, 
4169-007 Porto, Portugal}
\email{celiam@fc.up.pt}

\begin{abstract}
Let $X$ be a divergence-free vector field defined on a closed, connected Riemannian manifold. 
In this paper, we show the equivalence between the following conditions:
\begin{itemize}
	\item $X$ is in the $C^1$-interior of the set of \emph{expansive} divergence-free vector fields.
	\item $X$ is in the $C^1$-interior of the set of divergence-free vector fields which satisfy the \emph{shadowing property}.
	\item $X$ is in the $C^1$-interior of the set of divergence-free vector fields which satisfy the \emph{Lipschitz shadowing property}.
	\item $X$ has no singularities and $X$ is \emph{Anosov}.
\end{itemize}   
\end{abstract}

\maketitle        
          
%............................................................................

\begin{section}{Introduction and statement of the results}\label{resultstat}

Let $M$ be an $n$-dimensional, $n\geq 3$, closed, connected and smooth Riemannian manifold, endowed with a volume form, which has associated a measure $\mu$, called the Lebesgue measure, and let $d$ denote the Riemannian distance. 
Let $\mathfrak{X}^s(M)$ be the set of vector fields and let $\mathfrak{X}_{\mu}^s(M)$ be the set of divergence-free vector fields, both defined on $M$ and endowed with the $C^s$ Whitney topology, $s\geq 1$. 
From now on, we consider $s=1$.
A vector field $X$ has associated a flow, denoted by $X^t$, $t\in\mathbb{R}$.
Denote by $Per(X)$ the union of the \emph{closed orbits} of $X$ and by $Sing(X)$ the union of the \emph{singularities} of $X$. A subset of $M$ is said to be \emph{regular} if it has no singularities. 
Denote by $Crit(X)$ the set of the closed orbits and the singularities associated to $X$.
A singularity $p$ is \emph{linear} if there exist smooth local coordinates around $p$ such that $X$ is linear and equal to $DX(p)$ in these coordinates (see \cite[Definition 4.1]{V1}).

Take a $C^1$-vector field and a regular point $x$ in $M$ and let \linebreak$N_x:=X(x)^{\bot}\subset T_xM$ denote the ($\dim(M)-1$)-di\-men\-sion\-al normal bundle of $X$ at $x$ and $N_{x,r}=N_x\cap\{u\in T_xM:\|u\|<r\}$, for $r>0$. Since, in general, $N_x$ is not $DX^t_x$-invariant, we define the \emph{linear Poincar\'{e} flow} $$P^t_X(x):=\Pi_{X^t(x)}\circ DX^t_x,$$ where $\Pi_{X^t(x)}: T_{X^t(x)}M\rightarrow N_{X^t(x)}$ is the canonical orthogonal projection.

Let $\Lambda$ be a compact, $X^t$-invariant and regular set. 
If $N_{\Lambda}$ admits a $P_X^t$-invariant splitting $N_{\Lambda}=N_{\Lambda}^s\oplus N_{\Lambda}^u$, such that there is ${\ell}>0$ satisfying 
\begin{center}
$\|P_X^{\ell}(x)|_{N^s_x}\|\leq \dfrac{1}{2}$ and $\|P_X^{-{\ell}}(X^{\ell}(x))|_{N^u_{X^{\ell}(x)}}\|\leq\dfrac{1}{2}$, 
\end{center}
for any $x\in\Lambda$, we say that $\Lambda$ is \emph{hyperbolic}.
A vector field $X$ is said to be \emph{Anosov} if the whole manifold $M$ is hyperbolic. Let $\mathcal{A}^1_{\mu}(M)$ denote the set of Anosov $C^1$-divergence-free vector fields.

Take $T>0$ and $\delta>0$. A map $\psi:\mathbb{R}\rightarrow M$ is a \emph{$(\delta,T)$-pseudo-orbit} of a flow $X^t$ if, for any $\tau\in\mathbb{R}$, $d(X^t(\psi(\tau)),\psi(\tau+t))<\delta$, for any $\left|t\right|\leq T$.

Take $\epsilon>0$. A pseudo-orbit $\psi$ of a flow $X^t$ is \emph{$\epsilon$-shadowed} by some orbit of $X^t$ if there is $x\in M$ and an increasing homeomorphism \linebreak$\alpha:\mathbb{R}\rightarrow\mathbb{R}$, called reparametrization, which satisfies $\alpha(0)={0}$ and such that $d(X^{\alpha(t)}(x),\psi(t))<\epsilon$, for every $t\in\mathbb{R}$.

\begin{definition}
A $C^1$-vector field $X$ satisfies the \emph{shadowing property} if for any $\epsilon>0$ there is $\delta>0$ such that any $(\delta,T)$-pseudo-orbit $\psi$, for $T>0$, is $\epsilon$-shadowed by some orbit of $X$.
Let $\mathcal{S}^1(M)$ and $\mathcal{S}_{\mu}^1(M)$ denote the sets of vector fields in $\mathfrak{X}^1(M)$ and $\mathfrak{X}_{\mu}^1(M)$, respectively, satisfying the shadowing property.
\end{definition}

In the mid 1990's (see \cite{S}) it was shown that a dissipative diffeomorphism in the $C^1$-interior of the set of diffeomorphisms with the shadowing property is structurally stable. More recently, Lee and Sakai (see \cite{LS}) proved that if $X\in int \:\mathcal{S}^1(M)$ and has no singularities then $X$ satisfies the Axiom A and the strong transversality conditions, where $int S$ stands for the $C^1$-interior of a set $S\subset\mathfrak{X}^1(M)$.

Now, we introduce a weaker definition.

\begin{definition}
A $C^1$-vector field $X$ satisfies the \emph{Lipschitz shadowing property} if there are positive constants $\ell$ and $\delta_0$ such that any $(\delta,T)$-pseudo-orbit $\psi$, with $T>0$ and $\delta\leq\delta_0$ is $\ell\delta$-shadowed by an orbit of $X$.
Let $\mathcal{LS}^1(M)$ and $\mathcal{LS}_{\mu}^1(M)$ denote the sets of vector fields in $\mathfrak{X}^1(M)$ and $\mathfrak{X}_{\mu}^1(M)$, respectively, satisfying the Lipschitz shadowing property.
\end{definition}

It is immediate, from the previous definitions, that $\mathcal{LS}^1(M)\subset\mathcal{S}^1(M)$ and $\mathcal{LS}_{\mu}^1(M)\subset\mathcal{S}_{\mu}^1(M)$.
In \cite{T}, Tikhomirov proved that, for dissipative vector fields, Lipschitz shadowing is equivalent to structural stability.
Recently, Pilyugin and Tikhomirov proved the same result for dissi\-pa\-ti\-ve diffeomorphisms (see \cite{PT}).
We can find in \cite{P} the proof of that Anosov vector fields satisfy the Lipschitz shadowing property.

Let us now present the notion of \emph{expansive vector field}.
\begin{definition}
A $C^1$-vector field $X$ is \emph{expansive} 
if for any $\epsilon>0$ there is $\delta>0$ such that if $d(X^t(x),X^{\alpha(t)}(y))\leq\delta$, 
for all $t\in\mathbb{R}$, for $x,y\in M$ and a continuous map $\alpha:\mathbb{R}\rightarrow\mathbb{R}$
with $\alpha(0)=0$, then $y=X^{s}(x)$, where $\left|s\right|\leq\epsilon$. 
Denote by $\mathcal{E}^1(M)\subset\mathfrak{X}^1(M)$ the set of \emph{expansive vector fields} and by $\mathcal{E}_{\mu}^1(M)\subset\mathfrak{X}^1_{\mu}(M)$ the set of \emph{divergence-free expansive vector fields}, both endowed with the $C^1$ Whitney topology.
\end{definition}

In 1970's, Ma\~{n}\'{e} proved that if a dissipative diffeomorphism $f$ is in the $C^1$-interior of the set of expansive diffeomorphisms then $f$ is Axiom A and satisfies the quasi-transversality condition (see \cite{Ma}). 
Later, in \cite{MSS}, Moriyasu, Sakai and Sun proved the same result for dissipative vector fields. 
Moreover, they proved that if $X\in int\:\mathcal{E}^1(M)$ and has the shadowing property then $X$ is Anosov. 
Recently, Pilyugin and Tikhomirov proved that an expansive dissipative diffeomorphism having the Lipschitz shadowing property is Anosov (see \cite{PT}).

In this article, we intend to characterize divergence-free vector fields, with a topological property of Anosov systems, such as topological stability under $C^1$-open conditions: shadowing and expansiveness. We prove the following:

\begin{maintheorem}\label{mainth1}
For the divergence-free setting, one has that
$$int \:\mathcal{E}_{\mu}^1(M)=int \:\mathcal{S}_{\mu}^1(M)=int \:\mathcal{LS}_{\mu}^1(M)=\mathcal{A}^1_{\mu}(M).$$
\end{maintheorem}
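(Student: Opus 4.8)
The plan is to prove the chain of equalities
$$\mathcal{A}^1_{\mu}(M)\subset int\:\mathcal{LS}_{\mu}^1(M)\subset int\:\mathcal{S}_{\mu}^1(M)\subset int\:\mathcal{E}_{\mu}^1(M)\subset\mathcal{A}^1_{\mu}(M),$$
which, together with the trivial inclusions among the interiors, yields the statement. The first inclusion is essentially known: by \cite{P}, Anosov vector fields satisfy the Lipschitz shadowing property, and since the Anosov property is $C^1$-open (a uniform hyperbolic splitting for the linear Poincar\'e flow persists under $C^1$-perturbations, and divergence-free perturbations stay inside $\mathfrak{X}^1_{\mu}(M)$), every $X\in\mathcal{A}^1_{\mu}(M)$ has a $C^1$-neighbourhood inside $\mathcal{LS}_{\mu}^1(M)$; hence $\mathcal{A}^1_{\mu}(M)\subset int\:\mathcal{LS}_{\mu}^1(M)$. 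The inclusion $int\:\mathcal{LS}_{\mu}^1(M)\subset int\:\mathcal{S}_{\mu}^1(M)$ is immediate from $\mathcal{LS}_{\mu}^1(M)\subset\mathcal{S}_{\mu}^1(M)$.

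The analytic heart of the argument is the last inclusion, $int\:\mathcal{E}_{\mu}^1(M)\subset\mathcal{A}^1_{\mu}(M)$; the inclusion $int\:\mathcal{S}_{\mu}^1(M)\subset int\:\mathcal{E}_{\mu}^1(M)$ should follow a parallel but easier route (one shows that an interior-shadowing divergence-free field is interior-expansive, e.g.\ by proving both interiors coincide with $\mathcal{A}^1_{\mu}(M)$). To handle $int\:\mathcal{E}_{\mu}^1(M)\subset\mathcal{A}^1_{\mu}(M)$ I would proceed as follows. First, rule out singularities: if $X\in int\:\mathcal{E}_{\mu}^1(M)$ had a singularity $p$, a $C^1$-small divergence-free perturbation (using that $n\ge3$ and that divergence-free local models near a zero are flexible, cf.\ the linear-singularity normal form of \cite[Definition 4.1]{V1}) would create either a pair of nearby orbits staying close for all time or, via a Hamiltonian-type elliptic/complex eigenvalue configuration forced by the zero-trace condition, a small invariant family of closed curves around $p$, contradicting expansiveness; this is the step where the divergence-free constraint must be used carefully, since one cannot perturb the divergence away. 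Second, with $Sing(X)=\emptyset$ on a $C^1$-neighbourhood, invoke a $C^1$-closing/connecting lemma for divergence-free flows to show that expansiveness robust under perturbation forces every closed orbit to be hyperbolic and, more, forces the linear Poincar\'e flow over $Per(X)$ (hence over its closure, which by a divergence-free general-density argument is all of $M$) to admit a dominated splitting; a non-hyperbolic or weakly-dominated periodic orbit could be perturbed (Franks-type lemma for the Poincar\'e flow, in the volume-preserving category) to produce two distinct orbits $\epsilon$-shadowing each other with a continuous time reparametrization, violating expansiveness. Finally, upgrade the dominated splitting to uniform hyperbolicity: since $X$ is divergence-free the sum of the Lyapunov exponents along the normal bundle is zero, so a dominated splitting $N=N^1\oplus N^2$ with neither bundle uniformly contracting/expanding would, by Oseledets plus a perturbation argument, again contradict robust expansiveness; thus $N_M=N^s_M\oplus N^u_M$ is a genuine hyperbolic splitting, i.e.\ $X\in\mathcal{A}^1_{\mu}(M)$.

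The main obstacle I expect is the singularity-elimination step together with the passage from "dominated splitting over $Per(X)$" to "uniform hyperbolicity over all of $M$". In the dissipative setting of \cite{Ma,MSS} one has Ma\~n\'e's ergodic closing lemma and the freedom of arbitrary $C^1$-perturbations; in the divergence-free world one must instead use the volume-preserving versions of the closing and connecting lemmas (available in dimension $\ge3$) and the fact that for conservative systems the non-wandering set is all of $M$, so there is no "Axiom A plus strong transversality" decomposition to fall back on — the conclusion is directly Anosov. Making the perturbations both $C^1$-small and divergence-free while achieving the desired eigenvalue/Poincar\'e-flow configurations (Franks' lemma in the conservative category, the Bochi--Viana-type dichotomy "dominated splitting or zero exponents can be created") is the technically delicate part, and is presumably where most of the paper's work lies.
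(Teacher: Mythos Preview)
Your proposed chain $\mathcal{A}^1_{\mu}\subset int\,\mathcal{LS}_{\mu}^1\subset int\,\mathcal{S}_{\mu}^1\subset int\,\mathcal{E}_{\mu}^1\subset\mathcal{A}^1_{\mu}$ already has a structural problem: there is no direct inclusion $\mathcal{S}_{\mu}^1\subset\mathcal{E}_{\mu}^1$, and you acknowledge that this step really means ``prove both interiors equal $\mathcal{A}^1_{\mu}$ separately'', so the chain is cosmetic. The paper does exactly that: it shows $int\,\mathcal{E}_{\mu}^1\subset\mathcal{A}^1_{\mu}$ and $int\,\mathcal{S}_{\mu}^1\subset\mathcal{A}^1_{\mu}$ independently (via parallel lemmas), and closes the loop with $\mathcal{A}^1_{\mu}\subset int\,\mathcal{LS}_{\mu}^1$.

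The substantive gap is your singularity-elimination step. The claim that the zero-trace condition ``forces'' an elliptic/complex eigenvalue configuration is false in dimension $n\ge3$: a linear divergence-free singularity can perfectly well have all real eigenvalues (e.g.\ $1,1,-2$), giving a genuine hyperbolic saddle with no invariant family of closed curves nearby. Your alternative, ``a pair of nearby orbits staying close for all time'', is equally unclear near a hyperbolic saddle, and in fact flows with singularities \emph{can} be expansive (Lorenz-like examples), so a purely local argument of this type is unlikely to succeed. The paper's route is completely different and worth knowing: it first proves (Lemmas~\ref{mainlemma} and~\ref{mainlemma2}) that robust expansiveness, respectively robust shadowing, forces every closed orbit of every nearby divergence-free field to be hyperbolic, via a conservative Franks-type perturbation that linearises the Poincar\'e return map and manufactures a fixed normal vector when an eigenvalue of modulus one is present. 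Only \emph{after} this does it attack singularities, and indirectly: if a singularity existed, Lemma~\ref{linsinglemma} gives a nearby field with a linear hyperbolic saddle singularity, and Vivier's obstruction (Proposition~\ref{vivier}) says $P_Y^t$ then admits \emph{no} dominated splitting over the regular set; on the other hand, the dichotomy of Theorem~\ref{BGV} together with the already-established hyperbolicity of closed orbits and the density of long periodic orbits (Pugh--Robinson plus the topologically-mixing residual of Theorem~\ref{topmix}) \emph{forces} a dominated splitting over the regular set. This contradiction kills the singularity.

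Finally, your plan to upgrade ``dominated'' to ``uniformly hyperbolic'' by hand (Oseledets, zero exponent sum, perturbation) is essentially a reproof of the theorem ``divergence-free star vector field $\Rightarrow$ Anosov'' (Theorem~\ref{mainth}, from \cite{F}). The paper does not redo this; once singularities are excluded and all closed orbits of nearby fields are hyperbolic, $X$ is by definition in $\mathcal{G}^1_{\mu}(M)$, and Theorem~\ref{mainth} is invoked as a black box. So the architecture you are missing is: \emph{robust expansiveness (resp.\ shadowing) $\Rightarrow$ star condition $\Rightarrow$ Anosov}, with the second arrow outsourced.
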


\end{section}

%.................................................................................................

\begin{section}{Definitions and auxiliary results}

In this section, we state some definitions and present some results that will be used in the proofs.

Let $\Lambda$ be a compact, $X^t$-invariant and regular set. Consider a splitting $N=N^1\oplus\cdots\oplus N^{k}$ over $\Lambda$, for $1\leq k\leq n-1$, such that all the subbundles have constant dimension. This splitting is \emph{dominated} if it is $P_X^t$-invariant and there exists ${\ell}>0$ such that, for every $0\leq i<j\leq k$ and every $x\in\Lambda$, one has
\begin{center}
${\|P_X^{\ell}(x)|_{{{N}}_x^i}\|}\cdot{\|P_X^{-\ell}(X^{\ell}(x))|_{{{N}}_{X^{\ell}(x)}^j}\|}\leq\dfrac{1}{2}, \: \: \forall \: x\in \Lambda.$
\end{center}

The following result can be obtained following the ideas presented in \cite[Proposition 2.4]{BR}.

\begin{theorem}\label{BGV}
Let $X\in\mathfrak{X}^1_{\mu}(M)$ and let $\mathcal{U}$ be a small $C^1$-neighbourhood of $X$. Then, for any $\epsilon>0$, there exist $l,\tau>0$ such that, for any $Y\in \mathcal{U}$ and any closed orbit $x$ of $Y^t$ of period $\pi(x)>\tau$,
\begin{itemize}
	\item either ${P}_Y^t$ admits an $l$-dominated splitting over the $Y^t$-orbit of $x$
  \item or else for any neighbourhood $U$ of $x$, there exists an $\epsilon$-$C^1$-per\-tur\-ba\-tion $\tilde{Y}$ of $Y$, coinciding with $Y$ outside $U$ and along the orbit of $x$, such that ${P}^{\pi{(x)}}_{\tilde{Y}}(x)=id$, where $id$ denotes the identity on $N_x$.
\end{itemize}
\end{theorem}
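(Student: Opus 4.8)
The plan is to obtain this dichotomy as a divergence-free (volume-preserving) analogue of the Bonatti–Gourmelon–Vivier–type franks lemma for the linear Poincaré flow, following the strategy of \cite[Proposition 2.4]{BR} but in the conservative category. The essential point is that the statement is \emph{local along a single closed orbit}, so all one needs is a way to realise, by an arbitrarily small $C^1$-perturbation supported in a prescribed neighbourhood $U$ of the orbit and leaving the orbit itself (and hence its period) untouched, any sufficiently small perturbation of the collection of one-step linear Poincaré maps along a long segment of the orbit; and then to argue that if no such perturbation is trapped in a dominated-splitting regime, the maps can be composed to the identity. I would first fix the $C^1$-neighbourhood $\mathcal{U}$ of $X$ and, given $\epsilon>0$, extract from it the perturbation size available for the conservative Franks-type lemma along orbit segments (this is the conservative version of the Franks/Bonatti–Crovisier perturbation tool for the linear Poincaré flow, valid for divergence-free flows because one perturbs inside the symplectic-like structure of the normal bundle; the relevant statement is by now standard in the volume-preserving literature). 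This yields $\varepsilon_0>0$ such that any $\varepsilon_0$-perturbation of the normal cocycle over the orbit is realised by an honest $\epsilon$-$C^1$-perturbation $\tilde Y\in\mathcal{U}$ of $X$, coinciding with $X$ off $U$ and on the orbit of $x$.

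Next I would set up the ``uniformity vs.\ oscillation'' dichotomy at the level of the linear cocycle. Choose $l>0$ and $\tau>0$ so that $\tau$ forces the orbit to contain at least, say, $m$ consecutive time-$\ell$ blocks, where $\ell=l$ and $m$ is large enough (depending only on $\dim M$, $\varepsilon_0$ and the norms controlled by $\mathcal{U}$) for the Mañé/Bochi–Viana type argument to apply. Then either every invariant splitting of $N$ over the orbit that is a candidate for domination is in fact $l$-dominated — in which case we are in the first alternative — or, by the contrapositive, there is enough angle collapse / lack of uniform contraction among the intermediate Lyapunov exponents that a small ($\leq\varepsilon_0$) perturbation of the one-step maps, spread over the $m$ blocks, rotates the relevant subspaces and equalises the norms; iterating Franks-type adjustments block by block one drives the total normal return map $P^{\pi(x)}_{\tilde Y}(x)$ to a map conjugate to, and then equal to, the identity on $N_x$ (using that $\dim N_x=n-1$ and that in the divergence-free world the normal return map preserves the induced volume on $N_x$, so ``rotations'' and ``identity'' are compatible with the constraints). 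This is essentially the argument of \cite{BR}, transplanted verbatim once the conservative perturbation lemma is in hand.

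The main obstacle, and the place where genuine care is needed, is precisely that transplant step: one must check that every perturbation used in the non-dominated case can be taken \emph{divergence-free} and still supported in $U$ away from the orbit. Concretely, the Franks-type realisation for $P_X^t$ must be carried out so that the local perturbation of $X$ stays in $\mathfrak X^1_\mu(M)$; this is where \cite[Proposition 2.4]{BR} and the volume-preserving Franks lemma (in the spirit of Bonatti–Crovisier and its conservative refinements) are invoked, and one has to verify the compatibility of the normal-bundle perturbations with the volume form — the tangent-flow perturbations split as (flow direction) $\oplus$ (normal direction), and preserving $\mu$ amounts to a trace-zero condition on the normal part, which is exactly what the linear Poincaré flow already records. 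Once that compatibility is confirmed, the rest is bookkeeping: choosing $l,\tau$ uniformly over $\mathcal{U}$, counting blocks, and assembling the block-by-block perturbations into a single $\epsilon$-$C^1$-perturbation $\tilde Y$. I do not expect new ideas beyond \cite{BR} here; the contribution is checking that the construction respects the divergence-free constraint.
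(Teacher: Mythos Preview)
Your outline is correct and matches the paper's own treatment, which does not prove this theorem but merely records that it ``can be obtained following the ideas presented in \cite[Proposition~2.4]{BR}''. One clarification: \cite{BR} is itself the Bessa--Rocha paper on \emph{volume-preserving} flows, so Proposition~2.4 there is already stated and proved in the divergence-free category---the conservative Franks-type lemma you flag as the main obstacle is exactly what \cite{BR} provides, and no additional transplant step is required.
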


To prove Theorem \ref{mainth1}, we also need to state the definition of \emph{star vector field}.

\begin{definition}\label{starflowdef}
A $C^1$-vector field $X$ is a \emph{star vector field} if there exists a $C^1$-neighborhood $\mathcal{U}$ of $X$ in  $\mathfrak{X}^1(M)$ such that if $Y\in\mathcal{U}$ then every point in $Crit(Y)$ is hyperbolic. 
Moreover, a $C^1$-divergence-free vector field $X$ is a \emph{{divergence-free star vector field}} if there exists a $C^1$-neighborhood $\mathcal{U}$ of $X$ in $\mathfrak{X}_{\mu}^1(M)$ such that if $Y\in\mathcal{U}$ then every point in $Crit(Y)$ is hyperbolic. 
The set of star vector fields is denoted by $\mathcal{G}^1(M)$ and the set of divergence-free star vector fields is denoted by $\mathcal{G}_{\mu}^1(M)$.
\end{definition}

Accordingly with this definition, in \cite[Theorem 1]{F} it is proved the following result.

\begin{theorem}\label{mainth}
If $X\in\mathcal{G}^1_{\mu}(M)$ then $Sing(X)=\emptyset$ and $X$ is Anosov.
\end{theorem}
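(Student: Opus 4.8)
The plan is to argue in two stages: first rule out singularities, so that $X$ becomes a \emph{regular} divergence-free star vector field, and then use recurrence to promote the star condition to uniform hyperbolicity of the linear Poincaré flow over all of $M$.

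\emph{No singularities.} Suppose, towards a contradiction, that $\sigma\in Sing(X)$. Since $X\in\mathcal{G}^1_\mu(M)$, $\sigma$ is hyperbolic and remains so under small divergence-free perturbations; as $X$ is divergence-free the eigenvalues of $DX(\sigma)$ sum to zero, so $\sigma$ carries nontrivial stable and unstable manifolds. Using a conservative connecting lemma in the spirit of \cite{BR}, I would produce a divergence-free vector field $Y$, arbitrarily $C^1$-close to $X$ and still a divergence-free star vector field, having a homoclinic orbit $\Gamma$ associated to the continuation $\sigma_Y$. Because $\dim M\geq 3$, there is room to perturb $DY(\sigma_Y)$ \emph{within} the trace-zero constraint so that the transition map along orbits shadowing $\Gamma$ becomes unbalanced; this yields, near $\Gamma$, closed orbits of $Y$ (or of a further arbitrarily $C^1$-small divergence-free perturbation) whose derivative has an eigenvalue of modulus one, i.e. a non-hyperbolic closed orbit arbitrarily close to $X$. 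This contradicts $X\in\mathcal{G}^1_\mu(M)$. Hence $Sing(X)=\emptyset$, and in fact every divergence-free star vector field is regular.

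\emph{Uniform hyperbolicity of the periodic orbits.} Fix a small $C^1$-neighbourhood $\mathcal{U}\subset\mathfrak{X}^1_\mu(M)$ of $X$ as in Definition \ref{starflowdef} and in Theorem \ref{BGV}. A conservative Franks-type lemma shows that, over all $Y\in\mathcal{U}$ and all closed orbits $p$ of $Y$, the spectra of the maps $P_Y^{\pi(p)}(p)$ stay uniformly away from the unit circle — otherwise a $C^1$-small divergence-free perturbation would create a non-hyperbolic closed orbit inside $\mathcal{U}$. Combined with Theorem \ref{BGV}, whose second alternative $P^{\pi(p)}_{\tilde Y}(p)=id$ is incompatible with the star condition, this produces for each $Y\in\mathcal{U}$ an $\ell$-dominated splitting of $P_Y^t$ over $Per(Y)$ with $\ell$ depending only on $\mathcal{U}$; feeding the uniform spectral gap into a Pliss-lemma argument turns this dominated splitting into a hyperbolic one $N=N^s\oplus N^u$ with hyperbolicity constants depending only on $\mathcal{U}$. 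By continuity the splitting extends to $\overline{Per(Y)}$, which is therefore a hyperbolic set for $Y^t$, uniformly over $\mathcal{U}$.

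\emph{From periodic orbits to the whole manifold.} Since $X$ is divergence-free, Poincaré's recurrence theorem implies that $\mu$-almost every point is recurrent, hence the non-wandering set satisfies $\Omega(X)=M$; the same holds for every divergence-free vector field. By the Pugh–Robinson $C^1$ closing lemma for conservative flows, there is a residual subset of $\mathfrak{X}^1_\mu(M)$ along which $\overline{Per(\cdot)}=\Omega(\cdot)=M$, so I can choose a sequence $Y_k\to X$ in this set with $Y_k\in\mathcal{U}$. Each $Y_k$ is a divergence-free star vector field (being in $\mathcal{U}$), hence regular by the first step, and by the second step $P_{Y_k}^t$ is hyperbolic over $\overline{Per(Y_k)}=M$ with constants independent of $k$; for a regular vector field this is equivalent to $Y_k$ being Anosov, with uniform hyperbolicity constants. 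Passing to a convergent subsequence of the invariant splittings $N^s(Y_k)\oplus N^u(Y_k)$ in the Grassmann bundle of $TM$ gives a $P_X^t$-invariant hyperbolic splitting over all of $M$, so $X$ is Anosov. The main obstacle is the first step: one must realize the homoclinic loop at $\sigma$ and the subsequent trace-preserving eigenvalue perturbation while staying in $\mathfrak{X}^1_\mu(M)$ and $C^1$-small, so the conservative connecting and perturbation lemmas are what the argument really rests on; the remainder is a standard star-flow scheme transplanted to the volume-preserving category.
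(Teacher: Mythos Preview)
The paper does not itself prove this theorem: it is quoted from \cite[Theorem~1]{F} and used as a black box in the proof of Theorem~\ref{mainth1}. So there is no ``paper's own proof'' to compare against directly; one can only compare your outline with the strategy that the surrounding text suggests (and which is presumably the one in \cite{F}).

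Your second and third stages are essentially the expected route and match what one sees implicitly in this paper: the star condition forbids the identity alternative in Theorem~\ref{BGV}, hence gives a uniform $\ell$-dominated splitting of $P_Y^t$ over $Per(Y)$ for all $Y$ in a fixed $C^1$-neighbourhood; a uniform spectral gap plus a Pliss-type argument upgrades domination to hyperbolicity with uniform constants; Pugh--Robinson density of closed orbits and a limiting argument in the Grassmannian extend this to all of $M$. That part is fine as a sketch.

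The genuine gap is your first stage. Producing a homoclinic loop at $\sigma_Y$ by a conservative connecting lemma is plausible, but the step ``perturb $DY(\sigma_Y)$ within the trace-zero constraint so that the transition map along orbits shadowing $\Gamma$ becomes unbalanced'' is not an argument: a homoclinic loop at a hyperbolic saddle typically creates \emph{hyperbolic} horseshoes, and you give no mechanism that forces an eigenvalue of modulus one on some nearby closed orbit while staying divergence-free and $C^1$-small. This is exactly the hard point, and the paper (and, one infers, \cite{F}) avoids it entirely by a different device: after Lemma~\ref{linsinglemma} one may assume the singularity is linear hyperbolic of saddle type, and then Proposition~\ref{vivier} says $P_X^t$ admits \emph{no} dominated splitting over $M\setminus Sing(X)$; but the star condition together with Theorem~\ref{BGV} and density of long closed orbits forces such a dominated splitting, a contradiction. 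I would replace your homoclinic-loop paragraph by this Vivier-type obstruction; the rest of your scheme then goes through.
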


A $3$-dimensional proof of this result is presented in \cite{BR2} and a version for $4$-dimensional symplectic Hamiltonian vector fields can be found in \cite{BFR}.

The following result says that the linear Poincar\'e flow cannot admit a dominated splitting over the set of regular points of $M$ if the vector field has a linear hyperbolic singularity of saddle-type.

\begin{proposition}\cite[Proposition 4.1]{V1}\label{vivier}
If $X\in\mathfrak{X}^1(M)$ admits a linear hyperbolic singularity of saddle-type then ${P}_X^t$ does not admit any dominated splitting over $M\backslash Sing(X)$.
\end{proposition}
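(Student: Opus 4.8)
The plan is to argue by contradiction: suppose $X$ has a linear hyperbolic singularity $\sigma$ of saddle-type, and suppose towards a contradiction that $P_X^t$ admits a dominated splitting $N = N^1\oplus\cdots\oplus N^k$ over $M\setminus Sing(X)$ with domination constant $\ell>0$. Because $\sigma$ is of saddle-type, the spectrum of $DX(\sigma)$ contains an eigenvalue with negative real part and one with positive real part; since $\sigma$ is a \emph{linear} singularity, in suitable local coordinates $X$ \emph{equals} the linear field $DX(\sigma)$ near $\sigma$, so one has exact control of the orbits and of $DX^t$ in a neighbourhood of $\sigma$. The idea is to pick a point $x$ whose forward and backward orbits stay in this linearizing chart for a long time — concretely, a point close to $\sigma$ lying (almost) in an eigendirection with small real part, or on an orbit passing near $\sigma$ and lingering there — and to compute $P_X^t(x)$ explicitly from the linear model, showing that no $P_X^\ell$-invariant splitting of $N_x$ can satisfy the domination inequality.

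The key steps, in order: (1) Fix the linearizing coordinates at $\sigma$ and write $DX(\sigma)$ in (real) Jordan form; identify two eigenvalues $\lambda_-<0<\lambda_+$ (real parts) realized by eigenvectors $v_-,v_+$. (2) Consider an orbit segment that enters a small ball $B(\sigma,r)$ near the stable direction $v_-$ and exits near the unstable direction $v_+$; by taking the entry point closer and closer to $W^s_{loc}(\sigma)$, the transit time $T$ through $B(\sigma,r)$ can be made arbitrarily large. (3) Along this segment the vector field direction $X(X^t(x))$ rotates from (essentially) $v_-$ to (essentially) $v_+$, so the normal bundle $N_{X^t(x)}$ also turns; meanwhile $DX^t$ acts on $T_{X^t(x)}M$ by the linear flow $e^{t\,DX(\sigma)}$, which contracts the $v_-$-direction and expands the $v_+$-direction. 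Projecting onto the (rotating) normal bundle, estimate $\|P_X^\ell(y)|_{N^i_y}\|$ and $\|P_X^{-\ell}(Y^\ell(y))|_{N^j}\|$ for $y$ ranging over this long segment. (4) Show that for any $P_X^t$-invariant splitting of $N_x$, at some point $y$ along the segment one of the subbundles $N^i_y$ must be expanded by $P_X^\ell$ while a lower-index (or the same) subbundle is not sufficiently expanded by $P_X^{-\ell}$ — because the "unstable behaviour" carried by $v_+$ is fed into a normal direction that the invariant splitting cannot consistently track as the orbit turns — contradicting the domination inequality $\|P_X^\ell(y)|_{N^i_y}\|\cdot\|P_X^{-\ell}(Y^\ell(y))|_{N^j_{Y^\ell(y)}}\|\le \tfrac12$. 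This contradicts the assumed dominated splitting and proves the proposition.

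The main obstacle is step (3)–(4): making the interaction between the \emph{geometric} rotation of the normal bundle $N_{X^t(x)}$ (forced by $X$ turning from $v_-$ toward $v_+$) and the \emph{dynamical} hyperbolic action of $e^{t\,DX(\sigma)}$ quantitatively precise, so that one genuinely obtains a uniform violation of the domination constant $\ell$ rather than merely a pointwise failure. One must be careful that the normal projection $\Pi_{X^t(x)}$ does not conspire with the linear expansion/contraction to accidentally produce a dominated splitting; the saddle-type hypothesis (both a contracting and an expanding real part, with the vector field direction interpolating between the two) is exactly what rules this out. I would handle this by choosing the orbit segment so that for a long central portion $X(X^t(x))$ is nearly aligned with a slowest direction while the transverse plane still contains both a strongly expanded and a strongly contracted line, forcing any candidate splitting to "swap roles" and hence fail domination; the linearity of the singularity is what makes these estimates exact and the time $T$ unboundedly large.
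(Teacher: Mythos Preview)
The paper does not prove Proposition~\ref{vivier}; it is quoted from Vivier~\cite[Proposition~4.1]{V1} and used as a black box, so there is no in-paper proof to compare your attempt against.

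For context, your outline is headed in the right direction and captures the mechanism behind Vivier's actual argument: a dominated splitting for $P_X^t$ over $M\setminus Sing(X)$ cannot survive the way the flow direction swings from a stable eigendirection to an unstable one along orbits accumulating on the saddle $\sigma$. Vivier, however, does not chase a single long transit orbit and estimate norms as you propose; he compactifies instead, extending the linear Poincar\'e flow to the sphere of limit directions at $\sigma$, shows that a dominated splitting on the regular set extends continuously to this boundary, and then reads off a contradiction from a dimension comparison of the extended subbundles over the stable versus the unstable limit directions.

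Your direct approach can in principle be pushed through, but as you yourself acknowledge, steps~(3)--(4) are the entire content and you have not carried them out: you have not exhibited vectors in $N_y^i$ and $N_{X^\ell(y)}^j$ witnessing the violation of the domination inequality, nor explained why a $P_X^t$-invariant splitting cannot simply align itself with the (explicit) hyperbolic splitting of $e^{t\,DX(\sigma)}$ projected to the rotating normal bundle. The compactification viewpoint is precisely what replaces this delicate moving-frame estimate by a clean dimension count at the boundary; as written, your proposal is a reasonable plan with the decisive lemma still missing.
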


A vector field $X$ is \emph{topologically mixing} if, given any nonempty open sets $U,V\subset M$, there is $T>0$ such that, for any $t\geq T$, we have $X^t(U)\cap V\neq\emptyset$.
We end this section with a result stating that, $C^1$-generically, the divergence-free vector fields are topologically mixing.

\begin{theorem}\cite[Theorem 1.1]{B1}\label{topmix}
There exists a $C^1$-residual subset \linebreak$\mathcal{R}\subset\mathfrak{X}^1_{\mu}(M)$ such that if $X\in\mathcal{R}$ then $X$ is a topologically mixing vector field.
\end{theorem}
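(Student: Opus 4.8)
The plan is to realize topological mixing as a countable intersection of conditions and then produce, by Baire category, a residual subset of $\mathfrak{X}^1_{\mu}(M)$ on which all of them hold. Fix a countable basis $\{U_i\}_{i\in\mathbb{N}}$ of nonempty open sets of $M$. Since $X$ is topologically mixing exactly when for every pair $(i,j)$ there is $T>0$ with $X^t(U_i)\cap U_j\neq\emptyset$ for all $t\geq T$ (any open set contains some basic $U_i$, and $X^t(U_i)\cap U_j\subseteq X^t(U)\cap V$), it suffices to intersect two residual sets: a residual set $\mathcal{R}_1$ of transitive divergence-free vector fields, and a residual set $\mathcal{R}_2$ of fields possessing at least two closed orbits whose periods are rationally independent. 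I will then argue directly that every $X\in\mathcal{R}_1\cap\mathcal{R}_2$ is topologically mixing.

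For $\mathcal{R}_1$ I would invoke the conservative $C^1$-genericity of transitivity. The two ingredients are that volume preservation forces, by Poincar\'e recurrence, $\mu$-almost every point -- hence a dense set of points, since $\operatorname{supp}(\mu)=M$ -- to be recurrent, so that the nonwandering set is all of $M$; and the $C^1$ connecting lemma for conservative flows (in the spirit of Pugh--Robinson and Wen--Xia, in its divergence-free form), which upgrades chain transitivity to genuine transitivity along a residual subset of $\mathfrak{X}^1_{\mu}(M)$. Combining these yields the residual set $\mathcal{R}_1$ on which the flow has a dense orbit.

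For $\mathcal{R}_2$ I would first use the conservative closing lemma to obtain, on a residual set, that $\overline{Per(X)}=M$, so that closed orbits exist and are abundant. Given two closed orbits of periods $\tau_1,\tau_2$, a volume-preserving $C^1$-small perturbation supported in a flowbox around one of them changes its period by an arbitrarily small amount while keeping it a closed orbit; imposing $\tau_1/\tau_2\notin\mathbb{Q}$ is then an open and dense requirement, and running this over a countable family of orbit pairs produces the residual set $\mathcal{R}_2$. The delicate point here is carrying out the period-adjusting perturbation inside $\mathfrak{X}^1_{\mu}(M)$, that is, without destroying divergence-freeness, which forces the use of conservative flowbox and reparametrization perturbations rather than arbitrary ones.

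Finally, on $\mathcal{R}_1\cap\mathcal{R}_2$ I would combine transitivity with the two incommensurable periods to obtain mixing. Fix $U_i,U_j$. Density of orbits lets one route an orbit segment from $U_i$ into a small neighbourhood of the first closed orbit $\gamma_1$, then on to a neighbourhood of $\gamma_2$, and finally into $U_j$; because these connections pass through open sets, the set $S_{ij}=\{t:X^t(U_i)\cap U_j\neq\emptyset\}$ contains a nondegenerate interval and, by looping $k$ times near $\gamma_1$ and $m$ times near $\gamma_2$, is stable under adding quantities close to $k\tau_1+m\tau_2$. Since $\tau_1\mathbb{Z}+\tau_2\mathbb{Z}$ is dense in $\mathbb{R}$, these translates of the interval eventually overlap, so $S_{ij}\supseteq[T,\infty)$ for some $T$; this is precisely mixing for the pair $(i,j)$, and intersecting over all pairs gives the claim. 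The main obstacle I expect is the conservative connecting and closing machinery underlying $\mathcal{R}_1$ together with the period perturbation in $\mathcal{R}_2$: every perturbation must remain inside $\mathfrak{X}^1_{\mu}(M)$, so the classical dissipative arguments must be replaced by their divergence-free counterparts, and the robustness of the interval of transit times has to be controlled uniformly enough to close the gaps between the $k\tau_1+m\tau_2$ translates.
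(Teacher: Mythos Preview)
The paper does not contain a proof of this statement: it is quoted verbatim as \cite[Theorem~1.1]{B1} and used as a black box in the proof of Theorem~\ref{mainth1}. There is thus no ``paper's own proof'' to compare your attempt against.

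That said, your sketch is a reasonable route to the result and is close in spirit to how such generic-mixing statements are obtained in the literature (generic transitivity from the conservative connecting lemma, then an upgrade via closed orbits). Two places would need tightening if you carry it out. First, in the final step you need the \emph{semigroup} $\{k\tau_1+m\tau_2:\,k,m\in\mathbb{N}\}$ to have gaps smaller than the length of your interval in $S_{ij}$ beyond some threshold, not merely density of the full group $\tau_1\mathbb{Z}+\tau_2\mathbb{Z}$; this is true when $\tau_1/\tau_2\notin\mathbb{Q}$, but it is a separate (easy) lemma. Second, ``looping $k$ times near $\gamma_1$'' accrues error with $k$, so you need some mechanism preventing the orbit from drifting out of the neighbourhood before you have looped enough times; making the two closed orbits hyperbolic (generic, by the conservative Kupka--Smale theorem) and routing through their local stable and unstable manifolds via the $\lambda$-lemma is the standard way to control this. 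With those two points secured the argument goes through.
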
 

\end{section}

%...................................................................................................

\begin{section}{Proof of the theorem}

\begin{lemma}\label{mainlemma}
If $X\in int \: \mathcal{E}_{\mu}^1(M)$ then any closed orbit of $X$ is hyperbolic.
\end{lemma}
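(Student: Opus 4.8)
The plan is to argue by contradiction: suppose $X \in \operatorname{int}\mathcal{E}^1_\mu(M)$ but $X$ has a closed orbit $\gamma$ that is not hyperbolic. The strategy is to perturb $X$, staying inside the $C^1$-neighbourhood on which expansiveness holds, so as to produce a vector field that is manifestly not expansive, which is the contradiction. Since $X$ is in the interior of the divergence-free expansive vector fields, fix a $C^1$-neighbourhood $\mathcal{U} \subset \mathfrak{X}^1_\mu(M)$ of $X$ all of whose elements are expansive; I will perform every perturbation inside $\mathcal{U}$.

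First I would dispose of the case where $\gamma$ has short period by noting that there are only finitely many closed orbits of period below any fixed $\tau > 0$ for vector fields in a small neighbourhood (or, more carefully, handle short non-hyperbolic orbits directly by a local Franks-type perturbation of the Poincaré return map); the substantive case is $\pi(\gamma) > \tau$, where $\tau$ is the constant supplied by Theorem 2.1 applied to $\mathcal{U}$ with a suitably small $\epsilon$ (chosen so that an $\epsilon$-$C^1$-perturbation stays in $\mathcal{U}$). Theorem 2.1 then gives a dichotomy for $\gamma$: either the linear Poincaré flow $P^t_X$ admits an $\ell$-dominated splitting over $\gamma$, or there is an $\epsilon$-$C^1$-perturbation $\tilde{X} \in \mathcal{U}$, divergence-free, equal to $X$ along $\gamma$ and outside a small neighbourhood of $\gamma$, with $P^{\pi(\gamma)}_{\tilde{X}}(x) = \mathrm{id}$. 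In the second alternative, the perturbed orbit $\gamma$ is a closed orbit all of whose nearby normal trajectories return (to first order) to themselves; a further small divergence-free perturbation supported near $\gamma$ produces a whole cylinder (or at least a one-parameter family transverse to the flow direction) of orbits that shadow $\gamma$ arbitrarily closely for all time with $\alpha(t) = t$, violating expansiveness of $\tilde{X}$ — contradiction.

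It remains to rule out the first alternative, that $P^t_X$ has an $\ell$-dominated splitting over $\gamma$ while $\gamma$ is not hyperbolic. Here I would use that, because $X$ is divergence-free, $P^t_X$ preserves volume on the normal bundle, so along a single periodic orbit domination forces the splitting to be hyperbolic unless one of the bundles carries a neutral (unit-modulus) block; thus non-hyperbolicity of $\gamma$ together with a dominated splitting means the return map $P^{\pi(\gamma)}_X(x)$ has an eigenvalue on the unit circle inside a nontrivial bundle. I would then invoke a Franks-type lemma in the conservative setting (the same mechanism underlying Theorem 2.1) to make a $C^1$-small, divergence-free, support-localized perturbation turning that neutral block into the identity on the corresponding subbundle, and then proceed exactly as in the previous paragraph to build a nontrivial continuum of orbits staying $\delta$-close to $\gamma$ for all time, contradicting expansiveness.

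The main obstacle I anticipate is the construction, in the divergence-free category, of the family of orbits that accompanies $\gamma$ and destroys expansiveness: one must produce the shadowing family while respecting $\operatorname{div} = 0$, and one must check the expansiveness failure genuinely with a continuous reparametrization $\alpha$ (here $\alpha = \mathrm{id}$ works because the perturbations are supported off the flow direction and keep the period). A secondary technical point is ensuring all perturbations can be taken simultaneously $C^1$-small enough to remain in $\mathcal{U}$ and divergence-free; this is where the conservative Franks lemma / the perturbation technology behind Theorem 2.1 does the real work, and I would lean on it rather than reproving it.
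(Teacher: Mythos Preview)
Your strategy is correct in outline, but the route through Theorem~\ref{BGV} is an unnecessary detour. The paper's proof of this lemma does not invoke the dichotomy at all: given a non-hyperbolic closed orbit through $p$, the return map $P^\pi_X(p)$ already has an eigenvalue $\lambda$ with $|\lambda|=1$. After smoothing (Zuppa) and passing to Moser volume-preserving local coordinates, the paper applies the conservative Franks-type lemma \cite[Lemma~3.2]{BR} to obtain $Z\in\mathcal{U}$ whose Poincar\'e map, on a small ball in the cross-section, is \emph{exactly} the linear map $P^\pi_Y(p)$. When $\lambda=1$, the eigenvector $v$ then gives a point $\varphi_p^{-1}(v)\neq p$ fixed by $f_Z$, i.e.\ a second closed orbit that stays $\delta$-close to $\gamma$ for all time; this directly contradicts expansiveness of $Z$. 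When $|\lambda|=1$ but $\lambda\neq 1$, a further small perturbation makes the action a rational rotation and one passes to an iterate.

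In other words, the direct Franks-linearization argument you mention only parenthetically for short-period orbits is the entire proof, for all periods; your short/long split is artificial, and the claim that orbits of bounded period are finite is neither needed nor obviously true here. Your case~(a) is a red herring: non-hyperbolicity already hands you the unit-circle eigenvalue regardless of domination, so you end up doing a Franks perturbation anyway, which is the paper's starting point. The step you rightly flag as the main obstacle---passing from identity \emph{linearization} to an actual nearby invariant orbit---is precisely what the Moser--Franks linearization resolves, and the paper spells this out rather than appealing to ``a further small perturbation''. One small correction: the paper does not take $\alpha=\mathrm{id}$, since the nearby closed orbit need not share the period of $\gamma$; it constructs a suitable continuous reparametrization.
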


\begin{proof}

Take $X\in int \: \mathcal{E}_{\mu}^1(M)$ and $\mathcal{U}$ a $C^1$-neighbourhood of $X$ in $\mathcal{E}_{\mu}^1(M)$. 
Let $p$ be a point in a closed orbit of $X$ with period $\pi>0$ and $U_p$ a small neighbourhood of $p$ in $M$. 
By contradiction, assume that there is an eigenvalue $\lambda$ of $P_X^{\pi}(p)$ such that $\left|\lambda\right|=1$.

Applying Zuppa's Theorem (see \cite{Z}), we can find $Y\in\mathcal{U}$ such that $Y\in\mathfrak{X}^{\infty}_{\mu}(M)$, $Y^{\pi}(p)=p$ and $P_Y^{\pi}(p)$ has an eigenvalue $\lambda$ with $\left|\lambda\right|=1$.
\begin{remark}
Notice that if $P_Y^{\pi}(p)$ has not an eigenvalue $\lambda$ with $\left|\lambda\right|=1$, it has an eigenvalue $\tilde{\lambda}$ such that $|\tilde{\lambda}|\approx1$. So, we just have to perform a $C^1$-conservative perturbation $Z$ of $Y$, by \cite[Lemma 3.2]{BR}, such that $P_Z^{\pi}(p)$ has an eigenvalue $\bar{\lambda}$ with $|\bar{\lambda}|=1$. 
\end{remark}
Accordingly with Moser's Theorem (see \cite{M}), there is a smooth conserva\-ti\-ve change of coordinates $\varphi_p:U_p\rightarrow T_{p}M$ such that $\varphi_p(p)=\vec{0}$. 
Let $f_Y: \varphi^{-1}_p(N_{p})\rightarrow \Sigma$ be the Poincar\'e map associated to $Y^t$, where $\Sigma$ denotes the Poincar\'e section through $p$, and take $\mathcal{V}$ a $C^1$-neighbourhood of $f_Y$. 

By \cite[Lemma 3.2]{BR}, taking $\mathcal{T}$ a small flowbox of $Y^{[0,t_0]}(p)$, \linebreak$0<t_0<\pi$, we have that there are $Z\in\mathcal{U}$, $f_Z\in\mathcal{V}$ and $\epsilon>0$ such that: \linebreak$Z^t(p)=Y^t(p)$, $t\in\mathbb{R}$; $P_Z^{t_0}(p)=P_Y^{t_0}(p)$; $Z|_{\mathcal{T}^c}=Y|_{\mathcal{T}^c}$ and
\begin{equation}
f_Z(x) = \left\{ 
\begin{array}{ll}
\varphi_p^{-1}\circ P^{\pi}_Y(p)\circ\varphi_p(x) & \textrm{, $x\in B_{\epsilon/4}(p)\cap\varphi^{-1}_p(N_{p})$}\nonumber\\
f_Y(x) & \textrm{, $x\notin B_{\epsilon}(p)\cap\varphi^{-1}_p(N_{p})$}.
\end{array} 
\right.
\end{equation}

Notice that $P^{\pi}_Z(p)$ still has an eigenvalue $\lambda$ with $\left|\lambda\right|=1$.

Since $Z\in\mathcal{E}^1_{\mu}(M)$, for a sufficiently small $\epsilon>0$, there is $0<\delta<\epsilon$ such that if $d(Z^t(x),Z^{\alpha(t)}(y))\leq\delta$, for any $t\in\mathbb{R}$, $x,y\in M$ and $\alpha:\mathbb{R}\rightarrow\mathbb{R}$ continuous such that $\alpha(0)=0$, then $y=Z^s(x)$, where $\left|s\right|\leq\epsilon$.

Take $0<\delta'<\delta$ such that if $x,y\in M$ satisfy $d(x,y)<\delta'$ then $d(Z^t(x),Z^t(y))<\delta$, for $0\leq t\leq\pi$.

Firstly, assume that $\lambda=1$ and fix the associated non-zero eigenvector $v$ such that $\|v\|<\delta'$. Take $\varphi^{-1}_p(v)\in\varphi^{-1}_p(N_{p})\backslash\{p\}$ and note that 
$$f_Z(\varphi^{-1}_p(v))=\varphi^{-1}_p\circ P^{\pi}_Y(p)\circ\varphi_p(\varphi^{-1}_p(v))=\varphi^{-1}_p\circ P^{\pi}_Y(p)(v)=\varphi^{-1}_p(v).$$
So, $d(p,\varphi^{-1}_p(v))=d(p,f_Z(\varphi^{-1}_p(v)))=\|v\|<\delta'$. 
Then, as was mentioned before, $d(Z^t(p),Z^t(\varphi^{-1}_p(v)))<\delta$, for $0\leq t\leq\pi$. 
Therefore, we can find a continuous function $\alpha:\mathbb{R}\rightarrow\mathbb{R}$, with $\alpha(0)=0$, such that $d(Z^t(p),Z^{\alpha(t)}(\varphi^{-1}_p(v)))<\delta$, for every $t\in\mathbb{R}$. 
Now, since $Z\in\mathcal{E}^1_{\mu}(M)$, $\varphi^{-1}_p(v)=Z^s(p)$, for $\left|s\right|\leq\epsilon$. 
This is a contradiction, because $\varphi^{-1}_p(v)\in\varphi^{-1}_p(N_p)\backslash\{p\}$.

Now, if $\left|\lambda\right|=1$ but $\lambda\neq1$, we point out that, by \cite[Lemma 3.2]{BR}, we can find $W\in\mathcal{U}$ such that $P^{\pi}_W(p)$ is a rational rotation. Then, there is $T\neq0$ such that $P^{T+\pi}_W(p)=id$. So, we can go on with the previous argument in order to reach the same contradiction.
So, any closed orbit of $X$ is hyperbolic.\end{proof}

\begin{lemma}\label{mainlemma2}
If $X\in int\;\mathcal{S}_{\mu}^1(M)$ then any closed orbit of $X$ is hyperbolic.
\end{lemma}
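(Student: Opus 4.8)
The plan is to run the proof of Lemma~\ref{mainlemma} almost verbatim up to the construction of the linearized Poincar\'e return map, and then, in place of the contradiction with expansiveness, to exhibit a pseudo-orbit that cannot be shadowed. So I would argue by contradiction: fix $X\in int\:\mathcal S_\mu^1(M)$ and a neighbourhood $\mathcal U\subset\mathcal S_\mu^1(M)$, let $p$ lie on a closed orbit of period $\pi$, and suppose $P_X^\pi(p)$ has an eigenvalue $\lambda$ with $|\lambda|=1$. Exactly as in Lemma~\ref{mainlemma}, Zuppa's theorem \cite{Z}, Moser's theorem \cite{M} and \cite[Lemma~3.2]{BR} produce $Z\in\mathcal U$ with $Z^t(p)=X^t(p)$ for which the Poincar\'e map $f_Z$ coincides with $\varphi_p^{-1}\circ L\circ\varphi_p$ on a section ball $B_\rho(p)$, where $L=P_Z^{k\pi}(p)$ and $L|_E=\mathrm{id}$ for some subspace $E\subset N_p$ with $\dim E\ge 1$: if $\lambda=1$ we take $k=1$ and $E$ the eigenspace, and if $|\lambda|=1$, $\lambda\ne1$, we first perturb $L$ to a rational rotation and pass to a power with $P_Z^{k\pi}(p)=\mathrm{id}$, so that $E=N_p$.

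The new ingredient is to use the segment of fixed points of $f_Z$ lying in the $E$-direction to build a drifting pseudo-orbit. Fix a unit vector $v\in E$; then each $q_s:=\varphi_p^{-1}(sv)$, $0\le s\le\rho/2$, is a fixed point of $f_Z$, hence lies on a closed orbit $\gamma_s$ of $Z^t$ of period close to $k\pi$, and $d(q_s,q_{s'})\le|s-s'|$, so (shrinking $\delta$ relative to $\epsilon$ and $\rho$ as in Lemma~\ref{mainlemma}) nearby $\gamma_s$, $\gamma_{s'}$ stay $\delta$-close over one turn. I would then let $\psi$ follow $\gamma_0$ for all sufficiently negative time, then over $N\approx\rho/\delta$ consecutive turns pass from $\gamma_0$ through the $\gamma_s$ to $\gamma_{\rho/2}$ by performing one jump of size $<\delta$ on the section per turn, and finally follow $\gamma_{\rho/2}$ for all sufficiently positive time. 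Since each turn carries at most one such jump and $\psi$ follows the flow otherwise, $\psi$ is a genuine $(\delta,\pi)$-pseudo-orbit (and can be made continuous by the usual short transitions).

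To finish I would show $\psi$ is not $\epsilon$-shadowed once $\epsilon$ is small. If $Z^{\alpha(t)}(y)$ $\epsilon$-shadows $\psi$, then, since $\psi$ stays inside $B_{\rho/2}(p)$ on the section and keeps winding around $\gamma_0$ and $\gamma_{\rho/2}$, the orbit of $y$ also meets the section on every turn and all its section-iterates $y_n$ lie in $B_\rho(p)$; hence $\varphi_p(y_n)=L^n\varphi_p(y_0)$ for all $n\in\mathbb Z$, and two-sided boundedness of $(y_n)_n$ forces the component of $\varphi_p(y_0)$ in the generalized eigenspace of $L$ for the eigenvalue $1$ to lie in $\ker(L-\mathrm{id})$, so the $v$-coordinate of $\varphi_p(y_n)$ is independent of $n$. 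But the $v$-coordinate of the section-iterates of $\psi$ sweeps out the whole interval $[0,\rho/2]$, which is incompatible with $d(Z^{\alpha(t)}(y),\psi(t))<\epsilon$ for all $t$ as soon as, say, $\epsilon<\rho/8$. This contradicts $Z\in\mathcal S_\mu^1(M)$, so every closed orbit of $X$ is hyperbolic. The main obstacle is the bookkeeping in this last step --- taming the reparametrization $\alpha$, pinning down the return times of the shadowing orbit to the section, and checking that the $E$-direction really is frozen along bounded true orbits even in the presence of a Jordan block of $L$ --- but, as in Lemma~\ref{mainlemma}, none of this should require anything beyond the perturbation tools already in use.
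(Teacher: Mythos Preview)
Your proposal is correct and takes essentially the same approach as the paper: both linearize the Poincar\'e return map via Zuppa, Moser and \cite[Lemma~3.2]{BR}, reduce to eigenvalue $1$, build a drifting pseudo-orbit along the arc $\{\varphi_p^{-1}(sv)\}$ of fixed points (the paper outsources this construction to \cite[Proposition~A]{LS}), and then contradict the shadowing property of $Z$. Your closing argument---that the $v$-coordinate of any two-sided bounded true orbit through the linearized box is frozen, while the pseudo-orbit's $v$-coordinate sweeps $[0,\rho/2]$---is a more explicit substitute for the paper's terse triangle-inequality estimate $d(x_0,x_I)<2\epsilon$, but the underlying contradiction is the same.
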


\begin{proof}
Take $X\in int\;\mathcal{S}_{\mu}^1(M)$, $\mathcal{U}$ a $C^1$-neighbourhood of $X$ in $\mathcal{S}_{\mu}^1(M)$ and 
$p$ be a closed orbit of $X$ with period $\pi>0$.
By contradiction, assume that there is an eigenvalue $\lambda$ of $P_X^{\pi}(p)$ such that $\left|\lambda\right|=1$.

By Zuppa's Theorem (see \cite{Z}), we can find $Y\in\mathcal{U}$ such that \linebreak$Y\in\mathfrak{X}^{\infty}_{\mu}(M)$, $Y^{\pi}(p)=p$ and $P_Y^{\pi}(p)$ has an eigenvalue $\lambda$ with $\left|\lambda\right|=1$, as we remarked in the proof of Lemma \ref{mainlemma}. 

Consider $\varphi$ and $Z\in\mathcal{U}$ as described in the proof of Lemma \ref{mainlemma} and
\begin{equation}
f_Z(x) = \left\{ 
\begin{array}{ll}
\varphi_p^{-1}\circ P^{\pi}_Y(p)\circ\varphi_p(x) & \textrm{, $x\in B_{\epsilon_0}(p)\cap\varphi^{-1}_p(N_{p})$}\nonumber\\
f_Y(x) & \textrm{, $x\notin B_{4\epsilon_0}(p)\cap\varphi^{-1}_p(N_{p})$},
\end{array} 
\right.
\end{equation} where $\epsilon_0>0$ is small.

As it was explained in the proof of Lemma \ref{mainlemma}, we can assume $\lambda=1$ and fix the associated non-zero eigenvector $v$ such that $\|v\|=\epsilon_0/2$. Define $\mathcal{I}_v=\{sv: 0\leq s\leq 1\}$. 

Since $Z\in\mathcal{S}^1_{\mu}(M)$, for any $\epsilon>0$ there is $\delta>0$ such that every $(\delta,T)$-pseudo-orbit is $\epsilon$-shadowed by some orbit $y$ of $Z^t$, for $T>0$. 
Fix $0<\epsilon<\dfrac{\epsilon_0}{4}$.
The idea now is to construct a $(\delta,T)$-pseudo-orbit of $Z^t$, adapting the strategy described on \cite[Proposition A]{LS}. 
Let us present the highlights of that proof.

Let $x_0=p$ and $t_0=0$. Since $p$ is a parabolic closed orbit, we construct a finite sequence $\{(x_i,t_i)\}_{i=0}^I$, where $I\in \mathbb{N}$, $t_i>0$,  $x_i\in\varphi_p^{-1}(\mathcal{I}_v)$, for $1\leq i\leq I$, such that: $x_I=\varphi_p^{-1}(v)$;  $d(Z^t(f_Z(x_i)),Z^t(x_{i+1}))<\delta$, for $\left|t\right|\leq T$ and $0\leq i\leq I-1$; $Z^{t_i}(x_i)=f_Z(x_i)$. 
So, letting $S_n=\sum_{i=0}^nt_i$, for $0\leq n\leq I$, the map $\psi:\mathbb{R}\rightarrow M$ defined by

\begin{equation}
\psi(t) = \left\{ 
\begin{array}{ll}
Z^t(x_0) & \textrm{, $t<0$}\nonumber\\
Z^{t-S_n}(x_{n+1}) & \textrm{, $S_n\leq t< S_{n+1}$, $0\leq n\leq I-2$}\nonumber\\
Z^{t-S_{I-1}}(x_{I}) & \textrm{, $t\geq S_{I-1}$}.
\end{array} 
\right.
\end{equation}
is a $(\delta,T)$-pseudo-orbit of $Z^t$. Now, since $Z\in\mathcal{U}$, there is a reparame\-tri\-zation $\alpha$ and a point $y\in B_{\epsilon}(p)\cap \varphi^{-1}_p(N_{p,\epsilon})$ which $\epsilon$-shadows $\psi$, that is, $d(Z^{\alpha(t)}(y),\psi(t))<\epsilon$, for any $t\in\mathbb{R}$.
Note that, since $\lambda=1$, $$d(x_0,x_I)=d(p,\varphi^{-1}_p(v)=d(p,f_Z(\varphi^{-1}_p(v)))=\left\|v\right\|=\dfrac{\epsilon_0}{2}>2\epsilon.$$

But, since $Z$ has the shadowing property, $$d(x_0,x_I)\leq d(x_0,Z^{\alpha(S_{I-1})}(y))+ d(Z^{\alpha(S_{I-1})}(y),\psi(S_{I-1}))<2\epsilon,$$ which is a contradiction.
\end{proof}

\begin{lemma}\label{linsinglemma}
If $X\in\mathfrak{X}^1_{\mu}(M)$ has a singularity then, for any neighbourhood $\mathcal{V}$ of $X$, there is an open and nonempty set $\mathcal{U}\subset\mathcal{V}$ such that any $Y\in\mathcal{U}$ has a linear hyperbolic singularity.
\end{lemma}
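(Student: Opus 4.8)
The plan is to take a singularity $\sigma$ of $X$ and, working inside the given neighbourhood $\mathcal{V}$, first perturb $X$ so that $\sigma$ becomes hyperbolic, and then perturb again so that it becomes \emph{linearizable} near $\sigma$. Both perturbations must be made in the divergence-free class, so the key technical inputs are local perturbation lemmas for conservative vector fields that allow one to prescribe the linear part $DX(\sigma)$ up to a small conservative change, together with a conservative version of the Sternberg/Moser linearization. Concretely, first I would invoke a conservative perturbation result (of the type used in \cite{BR}, combined with Moser's theorem as already cited in the proof of Lemma \ref{mainlemma}) to replace $X$ by some $Y_1\in\mathcal{V}\cap\mathfrak{X}^1_\mu(M)$ which agrees with $X$ outside a small ball around $\sigma$, has $Y_1(\sigma)=0$, and whose linear part $DY_1(\sigma)$ is a traceless hyperbolic matrix arbitrarily close to $DX(\sigma)$; traceless hyperbolic matrices are dense among traceless matrices, so this costs only an arbitrarily small $C^1$-perturbation supported near $\sigma$.

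Next, having a hyperbolic singularity $\sigma$ of $Y_1$ with traceless linear part $A=DY_1(\sigma)$, I would use Moser's theorem (in the same smooth conservative form invoked in Lemma \ref{mainlemma}) to obtain volume-preserving local coordinates near $\sigma$; in these coordinates $Y_1$ has the form $Az + R(z)$ with $R(z)=o(\|z\|)$. I would then define $Y_2$ by replacing $Y_1$, on a small ball $B_r(\sigma)$ in these coordinates, by the linear field $z\mapsto Az$, interpolating with a bump function on an annulus so that $Y_2$ coincides with $Y_1$ (hence with $X$) outside a slightly larger ball. The field $z\mapsto Az$ is divergence-free because $\operatorname{tr}A=0$, and the interpolation can be arranged to remain divergence-free by the standard device of modifying only in directions tangent to the level sets of the interpolation, or by a further application of the conservative perturbation lemma of \cite{BR} to correct the divergence; for $r$ small the $C^1$-size of this modification is controlled by $\sup_{B_r}\|DR\|\to 0$. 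By construction $Y_2$ has a linear singularity at $\sigma$ in the sense of \cite[Definition 4.1]{V1}, and $Y_2$ is hyperbolic there since its linear part is exactly $A$.

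Finally I would check that the perturbations are genuinely \emph{open}, not merely the production of a single vector field: hyperbolicity of a singularity and the property of having a linear part equal to $A$ in fixed local coordinates are both robust under further small $C^1$-perturbations, so a whole $C^1$-neighbourhood $\mathcal{U}$ of $Y_2$ inside $\mathcal{V}$ consists of divergence-free fields with a linear hyperbolic singularity near $\sigma$. I expect the main obstacle to be the second step: combining the linearization near $\sigma$ with the constraint of staying divergence-free while patching back to $X$ on an annulus, since a naive cut-off destroys the divergence-free condition. This is handled by invoking the conservative perturbation machinery of \cite{BR} (already used repeatedly above) to absorb the divergence error, whose $C^1$-norm is dominated by the size of the nonlinear remainder $R$ on a ball that we are free to shrink.
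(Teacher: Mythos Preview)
Your first two steps---perturbing to make the singularity hyperbolic, then forcing the field to be literally linear near the singularity by cutting off the nonlinear remainder---are reasonable in spirit, and the paper's proof begins the same way (hyperbolize via \cite{BR}). But the final step, where you pass from the single field $Y_2$ to an \emph{open} set $\mathcal{U}$, is where the argument breaks.

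You write that ``hyperbolicity of a singularity and the property of having a linear part equal to $A$ in fixed local coordinates are both robust under further small $C^1$-perturbations, so a whole $C^1$-neighbourhood $\mathcal{U}$ of $Y_2$\ldots\ consists of divergence-free fields with a linear hyperbolic singularity.'' This conflates two different properties. What is $C^1$-robust is that the continuation $\sigma_Y$ of $\sigma$ is hyperbolic and that $DY(\sigma_Y)$ is close to $A$. What the lemma requires is that $\sigma_Y$ be a \emph{linear} singularity in the sense of \cite[Definition~4.1]{V1}, i.e.\ that $Y$ be smoothly conjugate to its linear part near $\sigma_Y$. This is \emph{not} $C^1$-open: if the eigenvalues of $A$ satisfy a resonance relation $\lambda_i=\sum m_j\lambda_j$, then an arbitrarily small perturbation of $Y_2$ can introduce a resonant monomial which obstructs smooth linearization. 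So your $\mathcal{U}$ need not consist of fields with linear singularities, and the proof as written does not deliver the openness that the lemma asserts.

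The paper handles exactly this point differently: after hyperbolizing, it uses Zuppa's theorem to pass to a $C^\infty$ field and then perturbs the eigenvalues so that Sternberg's nonresonance conditions hold. Nonresonance \emph{is} an open condition on the spectrum, hence on the field, and under it Sternberg's theorem provides the smooth conjugacy to the linear part for every field in a neighbourhood. That is what yields the open set $\mathcal{U}$. To repair your argument you would need, at minimum, to arrange that $A$ is nonresonant before constructing $Y_2$, and then invoke Sternberg (which in turn forces you to smooth via Zuppa first); at that point your hand-made linearization of $Y_2$ becomes unnecessary, and you are essentially back to the paper's proof.
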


\begin{proof}
Let $p$ be a singularity of $X\in\mathfrak{X}^1_{\mu}(M)$ and $\epsilon>0$.
By a small $C^1$-conservative perturbati\-on of $X$ (see \cite{BR}), we can find $X_1$, $\epsilon$-$C^1$-close to $X$, with a hyperbolic singularity $p$. 
Denote by $\mathcal{V}$ a $C^1$-neighbourhood of $X_1$ in $\mathfrak{X}^1_{\mu}(M)$ where the analytic continuation of $p$ is well-defined.
Now, by Zuppa's Theorem (see \cite{Z}), there is a smooth vector field $X_2\in\mathcal{V}$ with a hyperbolic singularity $p_2$. 
If the eigenvalues of $DX_2(p_2)$ satisfy the nonresonance conditions of the Sternberg linearization theorem (see \cite{SS}) then there is a smooth diffeomorphism conjugating $X_2$ and its linear part around $p_2$. 
If the nonresonance conditions are not satisfied then we can perform a $C^1$-conservative perturbation of $X_2$, so that the eigenvalues satisfy the nonresonance conditions.
So, since the set of divergence-free vector fields satisfying the nonresonance conditions is an open and dense set in $\mathfrak{X}^1_{\mu}(M)$, there is a $C^1$-neighbourhood ${\mathcal{U}}$ of $X_2$ in $\mathcal{V}$ such that any vector field $X_3\in{\mathcal{U}}$ is conjugated to its linear part, meaning that $X_3$ has a linear hyperbolic singularity.
\end{proof}

\begin{proof}[Proof of Theorem \ref{mainth1}]
Take $X\in int \:\mathcal{E}_{\mu}^1(M)$ and let $\mathcal{U}$ be a $C^1$-neighbour\-hood of $X$ in $\mathcal{E}_{\mu}^1(M)$, small enough such that Theorem \ref{BGV} holds. 

Recall that a conservative version of Pugh and Robinson's \textit{Gen\-er\-al Density Theorem} (see \cite{PR}) asserts that, $C^1$-generically, the closed orbits are dense in $M$. 
Denote by $\mathcal{PR}^1_{\mu}(M)$ the Pugh and Robinson's residual set in $\mathfrak{X}^1_{\mu}(M)$ and by $\mathcal{R}$ the residual set given by Theorem \ref{topmix}.

By contradiction, assume that there is $p \in Sing(X)$.
By Lemma\:\ref{linsinglemma}, there is $Y\in\mathcal{U}\cap\mathcal{R}\cap\mathcal{PR}^1_{\mu}(M)$ such that $p\in Sing(Y)$ is linear hyperbolic, and so of saddle-type.
So, by Proposition \ref{vivier}, ${P}_Y^t$ does not admit any dominated splitting over $M\backslash Sing(Y)$. 

We point out that, by Lemma \ref{mainlemma}, any closed orbit of $Y$ is hyperbolic. 
Now, as in the proof of \cite[Lemma 3.1]{F}, take a closed orbit $x$ of $Y$ with arbitrarily large period. So, by Theorem~\ref{BGV}, there are constants $\ell,\tau>0$ such that ${P}_Y^t$ admits an $\ell$-dominated splitting over the $Y^t$-orbit of $x$ with period $\pi(x)>\tau$.
Since $Y\in\mathcal{R}$, by the volume preserving Arnaud Closing Lemma (see \cite[p.13]{Ar}), there is a sequence of vector fields $Y_n\in\mathcal{U}\cap\mathcal{R}$, $C^1$-converging to $Y$, and, for every $n\in\mathbb{N}$, $Y_n$ has a closed orbit $\gamma_n=\gamma_n(t)$ of period $\pi_n$ such that $\displaystyle\lim_{n\rightarrow\infty}\gamma_n(0)=x$ and $\displaystyle\lim_{n\rightarrow\infty}\pi_n=+\infty$. 
Therefore, by Theorem~\ref{BGV}, ${P}_{Y_n}^t$ admits an $\ell$-dominated splitting over the orbit $\gamma_n$, for large $n$. 
Choosing $i\in J\subseteq\mathbb{N}$, there is a sequence of $Y_i$ with ${P}_{Y_i}^t$ having an $\ell$-dominated splitting on a closed orbit $p_i$ and such that the dimensions of the invariant bundles do not depend on $i$. 
Then, given that $M=\displaystyle\limsup_{n}\gamma_n=\displaystyle\bigcap_{N\in\mathbb{N}}\biggl(\overline{\bigcup_{n\geq N}^{\infty}\gamma_n}\biggr),$ we prove that ${P}_Y^t$ admits a dominated splitting over $M\backslash Sing(Y)$. 
But this is a contradiction.
So, $Sing(X)=\emptyset$ and, by Lemma \ref{mainlemma}, one has that if $X\in int\:\mathcal{E}_{\mu}^1(M)$ then $X\in\mathcal{G}^1_{\mu}(M)$.
Then, by Theorem \ref{mainth}, $X$ is Anosov.

Now, take $X\in int\:\mathcal{S}_{\mu}^1(M)$. Applying Lemma \ref{mainlemma2}, we can follow an analogous strategy to that one described above and prove that if $X\in int\:\mathcal{S}_{\mu}^1(M)$ then $Sing(X)=\emptyset$ and $X$ is Anosov.

In order to conclude the proof of Theorem \ref{mainth1}, it is enough to see that $\mathcal{LS}_{\mu}^1(M)\subset\mathcal{S}_{\mu}^1(M)$ and that $\mathcal{A}_{\mu}^1(M)\subset\mathcal{LS}_{\mu}^1(M)$, by \cite[Theorem\:1.5.1]{P}.
\end{proof}

\end{section}

%............................................................................

\section*{Acknowledgements}
I would like to thank my supervisors, M\'ario Bessa and Jorge Rocha, whose suggestions and guidance enabled me to develop this work. 

The author was supported by Funda\c c\~ao para a Ci\^encia e a Tecnologia, SFRH/BD/33100/2007.

%.................................................................................................

\end{document}